\newtheorem{theorem}{Theorem}[section]
\newtheorem{conjecture}{Conjecture}[section]
\newtheorem{lemma}[theorem]{Lemma}
\newcommand{\F}{\mathbb{F}}
\newcommand{\poly}{\mathsf{poly}}
\newcommand{\one}{{\mathbf{1}}}
\begin{document}
\title{On the degree of polynomials computing square roots mod $p$}
\author{ Kiran Kedlaya\thanks{
Department of Mathematics, University of California San Diego.\\
Research supported by NSF grant DMS-2053473, the UC San Diego
Warschawski Professorship, the Simons Fellows in Mathematics program of
the Simons Foundation (2023--24 academic year), and the School of
Mathematics of the Institute for Advanced Study (2023--24 academic year).\\
Email: kedlaya@ucsd.edu
}
 \and Swastik Kopparty\thanks{
Department of Mathematics and Department of Computer Science, University of Toronto.\\
Research supported by an NSERC Discovery Grant.\\
Email: swastik.kopparty@utoronto.ca
}
}
\date{}
\maketitle

\begin{abstract}
For an odd prime $p$, we say $f(X) \in {\mathbb F}_p[X]$ {\em computes square roots} in $\mathbb F_p$ if, for all nonzero perfect squares $a \in \mathbb F_p$, we have $f(a)^2 = a$.
 
When $p \equiv 3 \mod 4$, it is well known that $f(X) = X^{(p+1)/4}$ computes square roots. This degree is surprisingly low (and in fact lowest possible), since we have specified $(p-1)/2$ evaluations (up to sign) of the polynomial $f(X)$.
On the other hand, for $p \equiv 1 \mod 4$ there was previously no nontrivial bound known on the lowest degree of a polynomial computing square roots in $\mathbb F_p$.

We show that for all $p \equiv 1 \mod 4$, the degree of a polynomial computing square roots has degree at least $p/3$. 
Our main new ingredient is a general lemma which may be of independent interest: powers of a low degree polynomial cannot have too many consecutive zero coefficients.
The proof method also yields a robust version: any polynomial that computes square roots for 99\% of the squares also has degree almost $p/3$. 

In the other direction, Agou, Deligl{\'e}se, and Nicolas~\cite{ADN-sqrt} showed that for infinitely many $p \equiv 1 \mod 4$, the degree of a polynomial computing square roots can be as small as $3p/8$. 
\end{abstract}

\section{Introduction}
Let $p$ be an odd prime, and let $\F_p$ be the finite field with $p$ elements.

We say $f(X) \in \F_p[X]$ {\em computes square roots} in $\F_p$, if
for all nonzero perfect squares $a \in \F_p$, we have:
$$f(a)^2 = a.$$ In other words, for each nonzero perfect square $a \in \F_p$, $f(a)$ is one of its two square roots.

When $p \equiv 3 \mod 4$, then it is well known that $f(X) = X^{(p+1)/4}$ computes square roots. This degree is surprisingly low, since we are essentially interpolating a polynomial from $(p-1)/2$ evaluations (where the evaluations are specified up to sign). We are interested in whether there is a similar phenomenon for $p \equiv 1 \mod 4$.

Concretely, we study the question: what is the smallest degree of a polynomial that computes square roots? Despite being a basic and natural question, there were no nontrivial bounds known for this question for the case of $p \equiv 1 \mod 4$. 

There is a very simple argument\footnote{This argument works for all $p$, and thus we get that $X^{(p+1)/4}$ is the lowest degree polynomial computing square roots for $p \equiv 3 \mod 4$.}  that shows that the degree of such a polynomial $f(X)$ must be at least $\frac{p-1}{4}$; indeed, the nonzero polynomial $f(X)^2 - X$ vanishes at all the $\frac{p-1}{2}$ nonzero perfect squares in $\F_p$. 

Our main result is that, unlike the case of $p \equiv 3 \mod 4$, the degree of any polynomial computing square roots in the case of $p \equiv 1 \mod 4$ must be significantly higher, about $\frac{1}{3}\cdot p$.
\begin{theorem}
\label{thm:1mod4}
 Let $p \equiv 1 \mod 4$. Then any polynomial that computes square roots in $\F_p$ has degree at least $\frac{p-1}{3}$.
\end{theorem}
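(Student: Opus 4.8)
The plan is to show that when $f$ computes square roots, $f(X)^2$ — a square of a polynomial of degree $d:=\deg f$ — is forced to have a long block of consecutive zero coefficients in the middle of its support, and then to rule this out using the ``main new ingredient'', which I would isolate as follows and prove afterwards:

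\emph{Lemma.} Let $h\in\F_p[X]$ with $d:=\deg h\ge 1$, and let $k\ge 1$ satisfy $kd<p$. If $h^k=A+X^N B$ with $A,B\in\F_p[X]$, $A\ne 0\ne B$, $B(0)\ne 0$, and $\deg A<N$, then $N-\deg A\le d$ (equivalently, between any two nonzero coefficients of $h^k$ there are fewer than $d$ zero coefficients).

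To deduce the theorem, set $m=(p-1)/2$. Since $f(X)^2-X$ vanishes at the $m$ nonzero squares, which are exactly the simple roots of $X^m-1$, we get $X^m-1\mid f^2-X$; write $f^2=X+(X^m-1)q$ with $q\ne 0$ (else $f^2=X$, impossible), so $\deg q=2d-m=:e$ by comparing degrees. Because $p\equiv 1\bmod 4$, $m$ is even, hence so is $e$; and $d\ge (p-1)/4$ by the elementary argument recalled in the introduction, so $e\ge 0$. Suppose for contradiction $d<(p-1)/3$; then $2d<p$ and $e<m$. Rewrite $f^2=(X-q)+X^m q$: the summand $X-q$ has degree $\max(1,e)<m$, and $X^m q$ is supported in degrees $\ge m$. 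Applying the Lemma with $k=2$, $A:=X-q$, $X^N B:=X^m q$ — so $N=m+\operatorname{ord}_0 q\ge m$, $B(0)\ne0$, and $A\ne0$ since $e$ even forces $q\ne X$ — gives $N-\deg A\le d$, i.e. $m-\max(1,e)\le d$. If $e\ge 2$ this reads $2m-2d\le d$, i.e. $d\ge(p-1)/3$, a contradiction; if $e=0$ then $d=m/2$ and the inequality is $m/2\ge m-1$, which fails once $p\ge 7$. Hence $\deg f\ge(p-1)/3$ for all $p\equiv 1\bmod 4$ with $p\ge 13$ (and $p=5$ is inspected directly).

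For the Lemma, set $W:=kAh'-hA'$. Differentiating $h^k=A+X^N B$, multiplying by $h$, and substituting $h^k=A+X^N B$ once more yields, after rearranging, $W=X^{N-1}\bigl(h(NB+XB')-kXBh'\bigr)$, so $X^{N-1}\mid W$. Since $\deg(Ah')$ and $\deg(hA')$ are each at most $\deg A+d-1$, \emph{if} $W\ne 0$ then $N-1\le\deg W\le\deg A+d-1$, which is the claim. So it remains to exclude $W=0$. If $W=0$, then $R:=h^k/A$ satisfies $R'=h^{k-1}W/A^2=0$, and since $\F_p$ is perfect this forces $R=S^p$ for some $S\in\F_p(X)$. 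From $\deg A<N$ one checks $\operatorname{ord}_0 A=\operatorname{ord}_0 h^k=k\operatorname{ord}_0 h$, so $\operatorname{ord}_0 R=0$; and $R-1=X^N B/A$ has positive order at $0$, so $R(0)=1$, forcing $S(0)=1$ and $S\in\F_p[[X]]$. Then $R=S^p$ has power-series support inside $p\mathbb{Z}_{\ge 0}$, whereas $R-1=X^N B/A$ contributes a nonzero coefficient of $R$ in degree $N-\operatorname{ord}_0 A\in\{1,\dots,kd\}\subseteq\{1,\dots,p-1\}$ — not a multiple of $p$. This contradiction gives $W\ne 0$, finishing the Lemma.

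The step I expect to be the main obstacle is exactly this elimination of $W=0$. In characteristic zero it is instantaneous ($W=0$ would make $h^k$ a scalar multiple of $A$, absurd since $B\ne0$), but in characteristic $p$ one must route through ``$R'=0\Rightarrow R$ is a $p$-th power'' and then invoke $kd<p$ to contradict the gap $R$ is forced to carry; this is also what ties the final bound to $p/3$ rather than something cleaner, and why the hypothesis $kd<p$ cannot be removed (it is harmless here, since the Lemma is only used when $d<p/3$). It is also convenient that the Lemma assumes nothing about $h(0)$: this lets the possibility $f(0)=0$ be absorbed for free, as it merely enlarges $N$ and so sharpens the contradiction.
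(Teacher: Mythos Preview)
Your proof is correct and takes essentially the same route as the paper: your Lemma is the paper's Lemma~2.1 with the roles of the low- and high-degree pieces swapped (your $A$ is the paper's $B$, your $N$ is the paper's $\ell$), proved by the same differentiate-and-compare-Wronskian maneuver, and applied in the same way to the decomposition of $f^2$ coming from $X^{(p-1)/2}-1\mid f^2-X$. Your treatment of the degenerate case $W=0$ via $R=S^p$ and power-series support is a mild rephrasing of the paper's logarithmic-derivative step (which concludes $f^2=\lambda B$ directly from $(f^2/B)'=0$ and the degree bound), but the content is identical.
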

Our proof is based on expressing the property of computing square roots as a polynomial relation (involving some unknown polynomials), and then eliminating the unknown polynomials through a combination of taking derivatives and truncations. Abstracting out the main steps, we get a general lemma (a special case of the Mason-Stothers {\em abc}-theorem) which may be of independent interest: the powers of a low-degree polynomial cannot have too many consecutive zero coefficients.

How does  $p \mod 4$ play a role in the proof? Our proof ends up showing that for all $p$, any polynomial $f(X)$ of degree less than $\frac{p}{3}$ that computes square roots must have $f(X)^2 = X^{(p+1)/2}$ (as a polynomial identity), and this is not possible if $p \equiv 1 \mod 4$.

\subsubsection*{A robust version}

The degree of a polynomial computing a certain function is quite a brittle notion. Changing just a single value of the function can change the degree drastically. By using the key idea of the Berlekamp-Welch algorithm for decoding Reed-Solomon codes, we can strengthen the above result to get a robust version, given below.

\begin{theorem}
\label{thm:1mod4robust}
 Let $p \equiv 1 \mod 4$. Then any polynomial that computes square roots in $\F_p$ on all but $e \leq \frac{p-7}{12}$ nonzero perfect squares in $\F_p$ must have degree at least $\frac{p-1}{3} - e$.
\end{theorem}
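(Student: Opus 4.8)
The plan is to reduce to the argument behind Theorem~\ref{thm:1mod4} after absorbing the $e$ exceptional squares into an \emph{error-locator polynomial}, in the spirit of the Berlekamp--Welch decoder. Let $f(X)$ have degree $d$, let $S$ be the set of nonzero squares $a$ with $f(a)^2\ne a$ (so $|S|\le e$), and set $E(X)=\prod_{a\in S}(X-a)$, so that $\deg E\le e$. For every nonzero square $a$ we have $E(a)\bigl(f(a)^2-a\bigr)=0$: either $a\notin S$ and $f(a)^2=a$, or $a\in S$ and $E(a)=0$. Since the nonzero squares are precisely the roots of $X^{(p-1)/2}-1$, we get
$$\bigl(X^{(p-1)/2}-1\bigr) \mid E(X)\bigl(f(X)^2-X\bigr).$$
Multiplying the quotient identity by $E(X)$ and writing $F(X)=E(X)f(X)$ (so $\deg F\le d+e$) produces a polynomial identity of the shape
$$F(X)^2 \;=\; X\,E(X)^2 \;+\; \bigl(X^{(p-1)/2}-1\bigr)\,g(X)$$
for some polynomial $g(X)$. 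This is exactly the relation driving the proof of Theorem~\ref{thm:1mod4} (the case $E\equiv 1$, $F=f$), except that the ``target'' is now $X\,E(X)^2$, of degree $2\deg E+1\le 2e+1$, rather than the single monomial~$X$.

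I would then feed this identity into the same machinery --- derivatives, truncations, and the lemma that powers of a low-degree polynomial cannot have too many consecutive zero coefficients. Indeed, if $\deg F<p/3$ then $F(X)^2$ is a polynomial of degree $O(d+e)$ plus $X^{(p-1)/2}$ times a polynomial of degree $O(d+e)$ --- in the degenerate subcase where the high chunk is absent one already gets $f(X)^2=X$, which is impossible --- so $F(X)^2$ carries a run of roughly $\tfrac{p-1}{2}-O(d+e)$ consecutive zero coefficients. In the regime $d<\tfrac{p-1}{3}-e$ we have $\deg F\le d+e<\tfrac{p-1}{3}<\tfrac p3$, and this run is longer than $\deg F$, so the lemma pins down the quotient exactly as in the non-robust proof, forcing $g(X)=X\,E(X)^2$ and hence
$$F(X)^2 \;=\; X\,E(X)^2 + \bigl(X^{(p-1)/2}-1\bigr)X\,E(X)^2 \;=\; X^{(p+1)/2}\,E(X)^2 .$$
Cancelling $E(X)^2\ne 0$ in the integral domain $\F_p[X]$ leaves the identity $f(X)^2=X^{(p+1)/2}$, which is impossible when $p\equiv 1\bmod 4$ since then $(p+1)/2$ is odd; this contradiction gives $d\ge\tfrac{p-1}{3}-e$.

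The one place the argument genuinely differs from the proof of Theorem~\ref{thm:1mod4}, and the step I expect to be the main obstacle, is the degree bookkeeping inside the machinery: one must verify that replacing the degree-$1$ target $X$ by the degree-$(\le 2e+1)$ target $X\,E(X)^2$ shortens the guaranteed run of zero coefficients in $F(X)^2$ by only $O(e)$, so that the consecutive-zeros lemma still forces the quotient whenever $d+e<\tfrac{p-1}{3}$ --- in particular the bound should degrade smoothly to a vacuous statement as $e\to\tfrac{p-1}{3}$ --- and that the forced value of $g$ is precisely $X\,E(X)^2$, with no lower-order correction terms that would spoil the final cancellation.
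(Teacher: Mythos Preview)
Your proposal is correct and matches the paper's proof: the paper introduces the error-locator $E$, sets $g=Ef$, writes $g^2=A\cdot X^{(p-1)/2}+B$ with $B=XE^2-A$, verifies $A,B\neq 0$ (precisely your two degenerate cases $f^2=X$ and $f^2=X^{(p+1)/2}$), and applies Lemma~\ref{lemgen} with $\deg g\le d+e$ and $\deg B\le 2(d+e)-(p-1)/2$ to obtain $3(d+e)\ge p-1$. Your concerns in the final paragraph dissolve once you note that the degree bookkeeping is exact rather than merely $O(e)$, and that the lemma, read contrapositively, forces $B=0$ on the nose (hence $A=XE^2$ exactly), with no lower-order correction terms.
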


The connection to decoding algorithms for Reed-Solomon codes is not such a surprise. The problem of whether a low-degree polynomial can compute square roots is in fact a list-recovery problem for Reed-Solomon codes~\cite{RSlist}; our result effectively shows that a certain algebraic list recovery instance where each input list has size $2$ has no solutions. The difficulty is that this lies beyond the regime where we have a good understanding of list-recoverability and list-decodability of Reed-Solomon codes.

More concretely, let $C$ be the the Reed-Solomon code of degree $d$ polynomials over $\F_p$ with evaluation set $D$.
Suppose for each $x \in D$ we are given a set $S_x \subseteq \F_p$ with $|S_x| \leq 2$. How can we certify that there are no codewords $c$ of $C$ such that for each coordinate $x \in D$, we have $c_x \in S_x$? 
It is not known how to give an efficiently verifiable certificate of this {\em in general} when $d = \left(\frac{1}{2} + \Omega(1) \right) |D|$. In our setting $D$ is the set of perfect squares (so $|D| = (p-1)/2$), and $d$ is $p/3$, which is outside the range of known certification methods~\cite{RSlist}.

\subsubsection*{Better upper bounds for special $p$}

It turns out that for some $p$ which are $1 \mod 4$, there are polynomials computing square roots with degree about $\frac{3}{8} \cdot p$.

\begin{theorem}
\label{thm:5mod8}
 Let $p \equiv 5 \mod 8$. Then there is a polynomial that computes square roots in $\F_p$ with degree at most $\frac{3p+1}{8}$.
\end{theorem}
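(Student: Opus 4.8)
The plan is to turn the classical Atkin closed-form algorithm for square roots modulo a prime $p \equiv 5 \bmod 8$ into a single low-degree polynomial. The one piece of number theory needed is the second supplement to quadratic reciprocity: when $p \equiv 5 \bmod 8$, the element $2$ is a quadratic non-residue in $\F_p$, i.e.\ $2^{(p-1)/2} = -1$.

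First I would write down the candidate polynomial. Since $p \equiv 5 \bmod 8$, the integers $(p-5)/8$, $(p+3)/8$, $(p-1)/4$ and $(3p+1)/8$ are all nonnegative, so we may set
\[
 f(X) \;=\; 2^{(p-5)/8}\, X^{(p+3)/8}\,\left(2^{(p-1)/4}\, X^{(p-1)/4} - 1\right) \;\in\; \F_p[X].
\]
Its top-degree term is a nonzero constant times $X^{(p+3)/8 + (p-1)/4} = X^{(3p+1)/8}$, so $\deg f = (3p+1)/8$, which is exactly the bound claimed.

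Next I would verify that $f$ computes square roots. It is convenient to name $b = 2^{(p-5)/8} a^{(p-5)/8}$ and $i = 2^{(p-1)/4} a^{(p-1)/4} = (2a)^{(p-1)/4}$, so that $f(a) = a\,b\,(i-1)$ and also $i = 2 a b^2$. Fix a nonzero perfect square $a \in \F_p$, so $a^{(p-1)/2} = 1$. Then
\[
 i^2 = (2a)^{(p-1)/2} = 2^{(p-1)/2}\, a^{(p-1)/2} = (-1)(1) = -1 ,
\]
using that $2$ is a non-residue. Consequently
\[
 f(a)^2 = a^2 b^2 (i-1)^2 = a^2 b^2 (i^2 - 2i + 1) = a^2 b^2(-2i) = -\,i\cdot a\cdot(2ab^2) = -\,i\cdot a\cdot i = -\,i^2 a = a ,
\]
where the middle steps use $i^2 = -1$ once and $i = 2ab^2$ once. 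Hence $f(a)^2 = a$ for every nonzero square $a \in \F_p$, as required.

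The only real idea here is the decision to base the formula on $2a$ rather than on $a$ itself. The ``obvious'' Atkin-type approach computes $a^{(p-1)/4} \in \{1,-1\}$ and then branches; the two branches need formulas of genuinely different shapes, so no single polynomial does the job. Replacing $a$ by $2a$ and exploiting that $2$ is a non-residue converts $a^{(p-1)/4}$ into a uniform square root of $-1$, eliminating the case split and collapsing the whole computation into one polynomial. Beyond that, the proof is just the degree bookkeeping (all exponents integral, top degree $(3p+1)/8$) and the two-line verification above; I do not expect any genuine obstacle.
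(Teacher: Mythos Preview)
Your proof is correct. The polynomial you write down has the claimed degree, and your verification that $f(a)^2=a$ for every nonzero square $a$ is complete: the only arithmetic fact used beyond the Euler criterion for $a$ is that $2$ is a non-residue when $p\equiv 5\bmod 8$, which you invoke correctly.

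Your route is genuinely different from the paper's. The paper follows Tonelli--Shanks: it fixes an element $i\in\F_p$ with $i^2=-1$, observes that for a nonzero square $a$ one has $a^{(p-1)/4}\in\{\pm1\}$ and that the square root is $a^{(p+3)/8}$ or $i\,a^{(p+3)/8}$ accordingly, and then glues these two branches together by the Chinese remainder formula, discovering after the fact that the interpolant has degree only $(3p+1)/8$. You instead use Atkin's trick: replacing $a$ by $2a$ forces $(2a)^{(p-1)/4}$ to be a square root of $-1$ \emph{uniformly} over all nonzero squares $a$, so no case split ever arises and the single-polynomial form is immediate. In fact the two constructions produce the same polynomial up to a global sign once one takes $i=2^{(p-1)/4}$ in the paper's formula (since $(1-i)^2=-2i=-2^{(p+3)/4}=2^{(3p+1)/4}$). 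What your approach buys is directness and an explicit formula with concrete constants; what the paper's approach buys is a clearer picture of why the degree drop happens (two low-degree pieces that happen to combine well) and a template that suggests how to look for analogues in other residue classes modulo higher powers of $2$.
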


This was first shown by Agou, Deligl{\'e}se and Nicolas~\cite{ADN-sqrt} (see also~\cite{Carella-sqrt}). In the first version of this paper posted online, we gave a proof of this, unaware of their result. The proof that we had found of this theorem is the same as that of~\cite{ADN-sqrt} -- based on the Tonelli-Shanks algorithm -- but since it is quite simple and short, we keep it in this updated paper for completeness.


The method of Tonelli-Shanks also yields similar phenomena with degree $(\frac{1}{2} - \Omega(1))p$ for $p$ in special residue classes mod $2^j$ with $j$ constant. Theorem 5 of \cite{ADN-sqrt} gives another example of such a phenomenon for $p \equiv 7 \mod 12$, giving polynomials (different from $X^{(p+1)/4}$) computing square roots of degree about $\frac{5}{12} \cdot p$.

\subsubsection*{Upper bounds for general $p$}

Finally, we discuss upper bounds for the case of general $p$.
First, a heuristic. There are $2^{(p-1)/2}$ different square root functions (the choice of sign for each perfect square). If the unique interpolating polynomials of degree $< (p-1)/2$ for these functions had their coefficients behaving randomly, then we would expect a polynomial of degree at most $\frac{1}{2}p - \Omega\left( \frac{p}{\log p} \right)$ that computes square roots.

Formalizing this intuition, we show that there is a polynomial computing square roots with degree $\frac{1}{2} p - \widetilde{\Omega}(\sqrt p)$. This is done by looking at the explicit formulas for the coefficients of the interpolants and arguing their pseudorandomness via the Weil bounds and some elementary Fourier analysis.

We also note that all our results have analogues for computing $t$th roots.

\subsubsection*{Related Work}

The work of Agou, Deligl{\'e}se and Nicolas~\cite{ADN-sqrt} gave examples, for infinitely many $p$, of polynomials in $\F_p$ of abnormally low degree that compute square roots. The focus there was on finding polynomials with few monomials, and they gave interesting upper and lower bounds for this. Chang, Kim and Lee~\cite{CKL-nthroot} gave analogues of these results for computing $t$th roots.

Another related line of research has studied lower bounds on the degree of polynomials computing interesting arithmetic functions.
Coppersmith and Shparlinski~\cite{CS-dlog} (following an error-free computation result of Mullen and White~\cite{MW-dlog}), gave strong lower bounds on the degree of polynomials computing the discrete logarithm in prime fields $\F_p$ with as many as $(1-o(1))p$ errors. Winterhof~\cite{W-dlog} later gave a generalization of this to all finite fields. These results are related to {\em list-decoding} of Reed-Solomon codes, since for each input there is only one ``correct'' value which we are hoping the polynomial will compute. As mentioned earlier, the problems we consider are related to {\em list-recovery} of Reed-Solomon codes, where there are multiple ``correct'' values for any given input, and we hope the polynomial computes one of them.

\subsubsection*{Conclusions and Questions}

Computing square roots and understanding quadratic residuosity are central topics in algebraic computation and pseudorandomness.

Perhaps the most interesting and fundamental open question in this area is that of deterministically computing square roots mod $p$ in $\poly(\log p)$ time.
As we already saw, when $p \equiv 3 \mod 4$, the simple deterministic $\poly(\log p)$ algorithm of raising $x \in \F_p$ to the power $\frac{p+1}{4}$ computes the square root of $x$. Our results show that the $p \equiv 1 \mod 4$ case is qualitatively different in some respects. 
See~\cite{BachShallit,GerhardGathen,Shoup} for what is known about this computational problem and related number theoretic issues.

Other important questions include the problem of determining the size of the least quadratic residue mod $p$ (this is also connected to deterministic computation of square roots), and understanding the pseudorandomness of the Paley graph (for example, are Paley graphs Ramsey graphs?).

Finally, as mentioned above, our results can be viewed as showing that a certain list-recovery instance for Reed-Solomon codes has no solutions. 
We close with a conjecture about the list-recoverability of Reed-Solomon codes.
The conjecture talks about prime fields; the results of Guruswami and Rudra~\cite{GR-listrecovery} imply that this assumption cannot be dropped.

\begin{conjecture}
 Let $\F_p$ be a prime field. Let $\ell \in \mathbb N, \epsilon >0$ be constants. Suppose we are given, 
 for each $x \in \F_p$, a set $S_x$ with $|S_x| \leq \ell$. 
 Then:
 $$ \left|\{ P(X) \in \F_p[X] \mid \deg(P) \leq (1-\epsilon) p, \mbox{ and for all $x \in \F_p$, } P(x) \in S_x \}\right| \leq p^{O_{\epsilon, \ell}(1)}.$$
\end{conjecture}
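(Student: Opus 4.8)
The final statement is a \emph{conjecture}, not a theorem with a proof to reconstruct. What follows is therefore a proposal for how one might attack it, and a discussion of why the natural approaches fall short.

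The plan is to set up the problem as a counting problem for a zero-dimensional variety. Fix the sets $S_x \subseteq \F_p$ with $|S_x| \le \ell$, and consider the polynomial $g_x(Y) = \prod_{s \in S_x}(Y - s)$ of degree at most $\ell$. A polynomial $P$ of degree $\le d := (1-\epsilon)p$ lies in the list iff $g_x(P(x)) = 0$ for all $x \in \F_p$, i.e.\ iff the degree-$\le \ell d$ polynomial $G(X) := \prod_{x \in \F_p} g_x(P(x))$ --- wait, that product is over points, so instead one argues: for each $x$, $X - x$ divides $g_x(P(X))$ in $\F_p[X]$, hence $\prod_{x}(X-x) = X^p - X$ divides the single univariate polynomial $h_P(X) := \prod$ of nothing useful. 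The cleaner formulation is the interpolation one: writing $P(X) = \sum_{i=0}^{d} c_i X^i$, the constraints $P(x) \in S_x$ cut out a subset of $\F_p^{\,d+1}$, and the goal is to bound its size by $p^{O_{\epsilon,\ell}(1)}$. First I would try to prove the weaker statement where $\ell = 1$ (here each $S_x$ is a single point, so there is at most \emph{one} such $P$ when $d < p$, by interpolation --- this case is trivial and is the base of any induction). The real content begins at $\ell = 2$: this is exactly the square-roots-type list-recovery instance, and the results of the paper (Theorems~\ref{thm:1mod4} and~\ref{thm:1mod4robust}) show that a \emph{specific} such instance --- $S_x = \{\sqrt{x}, -\sqrt{x}\}$ on the squares --- has \emph{no} solutions of degree below $p/3$, which is far stronger than a polynomial count but applies only to that one instance.

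For the general case the natural tool is a \emph{capacity/dimension} argument à la Guruswami--Sudan and its refinements. One would try to build, via linear algebra, a nonzero bivariate polynomial $Q(X,Y)$ of controlled $(1, d)$-weighted degree that vanishes with multiplicity on each pair $(x, s)$, $s \in S_x$; then every list element $P$ forces $Y - P(X) \mid Q(X,Y)$, so the number of list elements is at most $\deg_Y Q$. The obstruction --- and this is precisely the ``difficulty'' flagged in the excerpt's discussion of certification --- is that in the regime $d = (1-\epsilon)p$ with $\ell$ fixed, the number of available monomials in $Q$ is $\Theta(p^2/d) = \Theta(p/\epsilon)$ while the number of vanishing conditions is $\Theta(\ell p)$, so for $\ell \ge 2$ there is \emph{no slack}: a nonzero $Q$ of small $Y$-degree need not exist. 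Thus the Guruswami--Sudan machine does not reach this range, which is exactly why the statement is only conjectured.

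A more promising line, and the one I would actually pursue, is an \emph{algebraic-geometry / function-field} argument specific to prime fields, exploiting that the hypothesis ``$\F_p$ prime'' cannot be dropped (the Guruswami--Rudra construction~\cite{GR-listrecovery} gives exponentially many solutions over large-alphabet extension fields, forcing any proof to use a genuinely characteristic-$p$, Frobenius-type input). The key step would be to show that two distinct list elements $P_1, P_2$ of degree $\le d$ cannot agree too much: $P_1(x), P_2(x) \in S_x$ for all $x$ means $(P_1(x) - P_2(x)) \mid \prod_{s \ne s' \in S_x}(s - s')$ pointwise, which via $X^p - X \mid (P_1 - P_2) \cdot (\text{something})$ would constrain $P_1 - P_2$ — but $\deg(P_1 - P_2) \le d < p$ gives essentially nothing, so one needs a second-order relation. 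I would look for a ``polynomial method over $\F_p$'' identity: associate to each list element $P$ the rational function $\sum_x \frac{1}{X - P(x)}$-type object, use that $\sum_{x \in \F_p} 1/(X-x) = (X^p-X)'/(X^p-X) = -1/(X^p - X) \cdot \text{(derivative)}$... in char $p$, $\sum_{x}1/(X-x) = 1/(X^p-X) \cdot p X^{p-1} - 1 \equiv \text{something computable}$, and try to derive a contradiction from having too many $P$'s. The main obstacle, which I do not see how to overcome and which keeps this a conjecture, is that all such algebraic identities are ``global'' (they see $\sum$ over all $x$) whereas the list constraint is ``local'' ($P(x) \in S_x$ for each $x$ separately with the set varying arbitrarily with $x$); bridging this local-to-global gap in the regime $d \ge p/2$, without any combinatorial slack and without the multiplicity trick having room to work, is precisely the hard open problem the authors are pointing to.
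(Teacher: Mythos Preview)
You correctly recognize that this is a \emph{conjecture}, not a theorem: the paper offers no proof, only the observation that the regime $d = (1-\epsilon)p$ lies beyond known list-recovery certification methods and that the prime-field hypothesis is forced by Guruswami--Rudra. Your discussion of why the Guruswami--Sudan interpolation machine runs out of slack here, and why any proof must exploit something genuinely characteristic-$p$, is consistent with the paper's own framing; there is nothing further to compare against.
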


We hope that our methods can give some insight into understanding the list-recovery capacity of Reed-Solomon codes, and in particular the above conjecture.

\section{Lower bound for polynomials computing square roots}

We now prove our first theorem about polynomials computing square roots mod $p$.

\noindent{\bf Theorem \ref{thm:1mod4}.\ }
{\em Let $p \equiv 1 \mod 4$. Then the degree of any polynomial that computes square roots in $\F_p$ is at least $\frac{p-1}{3}$.}
\begin{proof}
Suppose $f(X)$ is of degree $d < \frac{p-1}{3}$ and computes square roots in $\F_p$.
Then, since $X^{(p-1)/2} - 1$ is the vanishing polynomial of the set of nonzero perfect squares in $\F_p$, we have:
$$ f(X)^2 - X   \equiv 0  \mod (X^{(p-1)/2} - 1).$$

Let $A(X)$ be the polynomial of degree $2d-(p-1)/2$ such that
$$f(X)^2 - X =  A(X) \cdot (X^{(p-1)/2} - 1).$$
Let $B(X) = X - A(X)$.
Then we get:
\begin{equation}
\label{eq1}
f(X)^2 = A(X) \cdot X^{(p-1)/2}  + B(X),
\end{equation}
where:
\begin{itemize}
\item $\deg(f(X)) = d$.
 \item $\deg(A(X)), \deg(B(X)) \leq 2d - (p-1)/2$.
 \item $A(X) \neq 0$. If $A(X) = 0$ then $f(X)^2 = X$, which is impossible for a polynomial $f(X)$. 
 \item $B(X) \neq 0$. Otherwise $A(X) = X$, and $f(X)^2 = X^{(p+1)/2}$, which is possible only if $p \equiv 3 \mod 4$.
\end{itemize}

These conditions together will give us our lower bound on $d$.

Taking derivatives\footnote{Throughout this paper, we work with formal derivatives of polynomials.} of both sides of~\eqref{eq1}, we get:
\begin{align}
\label{eq2}
2f(X)f'(X) = A'(X) \cdot X^{(p-1)/2} - \frac{1}{2}A(X) X^{(p-3)/2} + B'(X).
\end{align}

Computing $2 f(X)^2 f'(X)$ in two ways using~\eqref{eq1} and~\eqref{eq2}, we get:
$$ 2 f'(X) A(X) X^{(p-1)/2} + 2 f'(X) B(X) = f(X) \left(X \cdot A'(X) - \frac{1}{2}A(X)\right) X^{(p-3)/2} + f(X) B'(X).$$
Now, using our assumption on $d$, the degrees of $2 f'(X)B(X)$ and $f(X) B'(X)$ are both at most
$3d - (p-1)/2 - 1 < (p-3)/2$, and thus taking the above equation mod $X^{(p-3)/2}$,
$$ 2f'(X) B(X) = f(X) B'(X).$$

Since $B(X) \neq 0$, we get $\frac{2 f'(X)}{f(X)} = \frac{B'(X)}{B(X)}$. Since $2\deg(f), \deg(B) < p$, by a basic property of logarithmic derivatives, this implies $f(X)^2 = \lambda B(X)$ for some nonzero $\lambda$, contradicting the fact that $A(X) \neq 0$. (See Remark 1 in Section~\ref{sec:rem} for a precise statement and a proof.)

Thus our assumption that $d < \frac{p-1}{3}$ is wrong, and the theorem follows.
\end{proof}

\subsection{Consecutive zero coefficients in powers of polynomials}

We isolate the key step above as the following lemma:

\begin{lemma}
\label{lemgen}
Let $\F$ be a field of characteristic $p$.
Let $f(X), A(X), B(X)$ be in $\F[X]$. 
 Suppose
 \begin{equation}
 \label{geneq1}
    f(X)^t = A(X) \cdot X^\ell + B(X)
 \end{equation}
 where:
 \begin{itemize}
  \item $\deg(f(X)) \leq d < \frac{p}{t}$,
  \item $\deg(B(X)) \leq b < p$,
  \item $A(X) \neq 0$,
  \item $B(X) \neq 0$,
\end{itemize}
Then $d + b \geq \ell$.
\end{lemma}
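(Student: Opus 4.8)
The plan is to mimic the elimination argument used in the proof of Theorem~\ref{thm:1mod4}, but now in the abstract setting of \eqref{geneq1}. Assume for contradiction that $d + b < \ell$. Differentiating \eqref{geneq1} gives
\[
t f(X)^{t-1} f'(X) = A'(X) X^\ell + \ell A(X) X^{\ell-1} + B'(X).
\]
Now I would compute $t f(X)^t f'(X)$ in two ways: multiplying the derivative identity by $f(X)$, and multiplying \eqref{geneq1} by $t f'(X)$. Equating the two expressions and cancelling the common term $t f'(X) A(X) X^\ell$ (or rather, collecting all terms divisible by a high power of $X$ on one side) yields
\[
t f'(X) B(X) - f(X) B'(X) = f(X)\bigl(X A'(X) + \ell A(X)\bigr) X^{\ell-1} - t f'(X) A(X) X^\ell,
\]
so the left-hand side is divisible by $X^{\ell-1}$.

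The key degree estimate is that $\deg\bigl(t f'(X) B(X)\bigr) \le (d-1) + b = d + b - 1$ and $\deg\bigl(f(X) B'(X)\bigr) \le d + (b-1) = d + b - 1$; here I must be slightly careful when $\mathrm{char}\,\F = p$ and $t \equiv 0$ or $b \equiv 0 \pmod p$, in which case the derivative drops in degree or vanishes, but the degree bound $\le d+b-1$ still holds. Under the contradiction hypothesis $d + b - 1 < \ell - 1$, so the left-hand side, being a polynomial of degree $< \ell - 1$ that is divisible by $X^{\ell-1}$, must be identically zero:
\[
t f'(X) B(X) = f(X) B'(X).
\]

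From here I would invoke the logarithmic-derivative fact alluded to in the paper (Remark 1 in Section~\ref{sec:rem}): since $B(X) \ne 0$, we get $\tfrac{t f'(X)}{f(X)} = \tfrac{B'(X)}{B(X)}$, and because $t\,\deg(f) \le td < p$ and $\deg(B) \le b < p$, this forces $f(X)^t = \lambda B(X)$ for some nonzero $\lambda \in \F$. But then \eqref{geneq1} gives $A(X) X^\ell = (1-\lambda) B(X)$ has degree $\le b < \ell$, which is impossible for a nonzero $A(X)$ (and if $\lambda = 1$ it directly forces $A(X) X^\ell = 0$, contradicting $A \ne 0$). Either way we contradict $A(X) \ne 0$, so $d + b \ge \ell$.

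I expect the only real subtlety — the "main obstacle" — to be the characteristic-$p$ bookkeeping: making sure the logarithmic-derivative argument is stated correctly (it needs $p \nmid$ something, which is exactly why the hypotheses $td < p$ and $b < p$ appear), and checking that the degree bounds on $f'B$ and $fB'$ survive the possible degree-drop when $p$ divides a leading exponent. Everything else is a direct transcription of the two-ways computation already carried out in the proof of Theorem~\ref{thm:1mod4}, now with exponent $t$ in place of $2$ and $\ell$ in place of $(p-1)/2$.
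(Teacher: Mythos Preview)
Your proposal is correct and follows essentially the same approach as the paper's proof: differentiate, compute $t f(X)^t f'(X)$ two ways, reduce modulo $X^{\ell-1}$ using the degree bound $d+b<\ell$, and finish with the logarithmic-derivative identity. The only cosmetic differences are that the paper packages the derivative of $A(X)X^\ell$ as a single polynomial $C(X)X^{\ell-1}$ rather than expanding it, and explicitly records at the outset that $d+b<\ell$ forces $f\neq 0$ (which you use implicitly when dividing by $f$); also note the harmless sign slip $(1-\lambda)$ versus $(\lambda-1)$ in your last paragraph.
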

In words, this says that if $dt < p$, the $t$th power of a polynomial of degree $d$ cannot have $d$ consecutive $0$ coefficients.
\begin{proof}
Supose $d + b < \ell$. Observe that this implies that $f(X) \neq 0$.

 Taking derivatives of both sides of \eqref{geneq1}, we get:
 \begin{equation}
  \label{geneq2}
  t f(X)^{t-1} f'(X) = C(X) X^{\ell-1} + B'(X),
 \end{equation}
 for some $C(X) \in \F[X]$. 
 
 Computing $t f(X)^t f'(X)$ in two different ways using~\eqref{geneq1},~\eqref{geneq2}, we get:
 $$ t A(X) f'(X) X^{\ell} + t f'(X) B(X) = f(X) C(X) X^{\ell-1} + f(X) B'(X).$$
 Since $\deg(t f'(X) B(X)), \deg(f(X) B'(X)) < d+b \leq \ell - 1$,
 by taking this equation mod $X^{\ell-1}$ we get:
 $$ t f'(X) B(X) = f(X) B'(X),$$
 and since $f(X), B(X)$ are nonzero, we get that:
 $$ t \frac{f'(X)}{f(X)} = \frac{B'(X)}{B(X)}.$$
 
 By the logarithmic derivative, we get $f(X)^t = \lambda B(X)$ for some nonzero $\lambda$, contradicting our assumption that $A(X) \neq 0$.

 Thus $d+ b \geq \ell$ as claimed.
\end{proof}

\subsection{Remarks}
\label{sec:rem}

\begin{enumerate}
 \item The key fact about logarithmic derivatives that we are using is that if 
 $f(X), B(X) \in \F_p[X]$, $t \cdot \deg(f), \deg(B) < p$, and:
 $$ t \frac{f'(X)}{f(X)} = \frac{B'(X)}{B(X)},$$
 then $f(X)^t = \lambda \cdot B(X)$ for some constant $\lambda \in \F_p$. 
 
 We recap a quick proof. The hypothesis implies that $\left( \frac{f(X)^t}{B(X)} \right)' = 0$,
  and thus $\frac{f(X)^t}{B(X)}$ must be a rational function in $X^p$. To see the last deduction, note that $\left(f(X)^t \cdot B(X)^{p-1} \right)' = \left( \frac{f(X)^t}{B(X)} \cdot B(X)^p \right)' = \left( \frac{f(X)^t}{B(X)} \right)' \cdot B(X)^p + \frac{f(X)^t}{B(X)} \cdot p \cdot B(X)^{p-1} \cdot B'(X) = 0 + 0 = 0$. Thus the polynomial $f(X)^t \cdot B(X)^{p-1}$ is a polynomial in $X^p$, which implies that $\frac{f(X)^t}{B(X)} = 
 \frac{f(X)^t \cdot B(X)^{p-1}}{B(X)^p} =
 \frac{f(X)^t \cdot B(X)^{p-1}}{B(X^p)}$ is a rational function in $X^p$. 
 Once we know that $\frac{f(X)^t}{B(X)}$ is a rational function in $X^p$, our assumption about the degrees implies the result.

 \item The exact same proof also classifies when low-degree polynomials can compute square roots of very-low-degree polynomials on a multiplicative group.
 \begin{theorem}
 Let $G \subseteq \F_{p}^*$ be a multiplicative subgroup of size $m$, with $m \leq \frac{p-1}{2}$.
  Let $C(X) \in \F_p[X]$ have degree at most $\frac{m}{3}$.
Suppose $f(X) \in \F_p[X]$ is such that $f(a)^2 = C(a)$ for all $a \in G$.
Then one of the following alternatives must hold:
 \begin{itemize}
  \item $f(X)^2 = C(X)$,
  \item $f(X)^2 = C(X) \cdot X^m$,
  \item $\deg(f) \geq \frac{2m}{3}$. 
 \end{itemize}
 \end{theorem}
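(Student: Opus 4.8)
The plan is to run the proof of Theorem~\ref{thm:1mod4} essentially verbatim, with the vanishing polynomial $X^m-1$ of $G$ playing the role of $X^{(p-1)/2}-1$ and $C(X)$ playing the role of $X$, and then to finish by quoting Lemma~\ref{lemgen} as a black box. If $\deg(f)\ge 2m/3$ we are in the third alternative, so we may assume $d:=\deg(f)<2m/3$. Since $f(a)^2=C(a)$ for all $a\in G$ and $X^m-1$ vanishes exactly on $G$, we have $f(X)^2-C(X)\equiv 0\pmod{X^m-1}$; writing $f(X)^2-C(X)=A(X)(X^m-1)$ and setting $B(X):=C(X)-A(X)$ gives
\[
 f(X)^2 = A(X)\,X^m + B(X).
\]
If $A(X)=0$ then $f(X)^2=C(X)$ and we are in the first alternative; if $B(X)=0$ then $A(X)=C(X)$ and $f(X)^2=C(X)\,X^m$, the second alternative. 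So it remains to derive a contradiction from $A(X)\ne 0$ and $B(X)\ne 0$.

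For the degree bookkeeping: comparing degrees in $f(X)^2-C(X)=A(X)(X^m-1)$, the right-hand side is nonzero of degree $\deg(A)+m\ge m$, so the left-hand side cannot have degree $\le\deg(C)\le m/3<m$; hence $2d=\deg(f^2)>\deg(C)$, giving $\deg(A)=2d-m$ and in particular $2d\ge m$. Since $d<2m/3$ this yields $\deg(A)=2d-m<m/3$, and therefore $\deg(B)\le\max(\deg C,\deg A)\le m/3$. Now I would apply Lemma~\ref{lemgen} with $t=2$, $\ell=m$, $b=\deg(B)$: the hypotheses $\deg(f)\le d$, $\deg(B)\le b<p$, $A\ne 0$, $B\ne 0$ all hold, and $d<p/2$ (see below), so the lemma gives $d+b\ge m$. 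But $d<2m/3$ and $b\le m/3$ force $d+b<m$ (a one-line check disposes of the rounding implicit in ``$m/3$''), a contradiction. This establishes the trichotomy.

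The argument itself has no real obstacle once one accepts Lemma~\ref{lemgen}; indeed the content of the theorem is little more than a reformulation of that lemma together with the observation that the ``garbage'' polynomial now has the extra branch $f(X)^2=C(X)\,X^m$. The one point that genuinely needs attention is verifying the lemma's degree hypothesis $\deg(f)<p/t=p/2$. When $G$ is a proper subgroup of $\F_p^*$ we have $m\le(p-1)/2$, so $d<2m/3\le(p-1)/3<p/2$ and there is nothing to check. The borderline case $G=\F_p^*$ (so $m=p-1$) is where one must be slightly more careful, since then $2d$ can exceed $p$; here I would rerun the derivative-and-truncation computation of Lemma~\ref{lemgen} directly and note that from $(f(X)^2/B(X))'=0$ together with the sharp degree bounds $\deg(f^2)=\deg(A)+m$ and $\deg(B)\le m/3$ one still forces $f(X)^2/B(X)$ to be constant, hence $f(X)^2=\lambda B(X)$ and $A(X)=0$, a contradiction. (The extra term $mA(X)X^{m-1}$ produced when differentiating $A(X)X^m$ in characteristic $p$ causes no trouble; it is precisely the reason Lemma~\ref{lemgen} is stated with an arbitrary polynomial multiplying $X^{\ell-1}$.) A final, purely cosmetic issue is keeping the floors and ceilings in the bounds ``$\deg(C)\le m/3$'' and ``$\deg(f)<2m/3$'' straight, but these only ever work in our favor.
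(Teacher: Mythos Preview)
Your approach matches the paper's intended one: the paper asserts that the theorem follows by ``the exact same proof'' as Theorem~\ref{thm:1mod4}, and you correctly set up $f(X)^2 = A(X)X^m + B(X)$ and reduce to Lemma~\ref{lemgen}. For $G$ a proper subgroup of $\F_p^*$ (so $m \le (p-1)/2$), your argument is complete and correct, since the hypothesis $\deg(f) < p/2$ of Lemma~\ref{lemgen} is then automatic.

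Your patch for the case $G = \F_p^*$ (i.e., $m = p-1$), however, has a genuine gap. You assert that from $(f(X)^2/B(X))' = 0$ together with the degree bounds one can still force $f(X)^2/B(X)$ to be constant. This does not follow: all one obtains is $f^2/B \in \F_p(X^p)$, and since $2d - \deg(B)$ can equal $p$, the quotient $f^2/B$ can be a nonconstant polynomial $\alpha X^p + \beta$. In fact the theorem as stated is \emph{false} in this case, so no patch can work. Take $p = 13$, $m = 12$, $C(X) = (X+1)^2$ (degree $2 \le m/3 = 4$), and $f(X) = (X+1)^7$ (degree $7 < 2m/3 = 8$). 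For every $a \in \F_{13}^*$ one has $f(a)^2 = (a+1)^{14} = (a+1)^2 = C(a)$, yet $f(X)^2 = (X+1)^{14}$ equals neither $C(X)$ nor $C(X)X^{12}$. Here $B(X) = X+1$ and $f(X)^2/B(X) = (X+1)^{13} = X^{13}+1$, exactly the nonconstant element of $\F_{13}[X^{13}]$ your argument failed to exclude. The logarithmic-derivative step in the proof of Theorem~\ref{thm:1mod4} (and hence Lemma~\ref{lemgen}) explicitly uses $2\deg(f) < p$; this holds whenever $2m/3 \le p/2$, i.e., for every proper subgroup $G$, but not for $G = \F_p^*$ with nonconstant $C$.
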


 The above statement for $m = p-1$ and $C(X)$ being a constant follows from a result of Biro~\cite{Biro}, who classified low-degree polynomials that take two values on $\F_p^*$. The proof from~\cite{Biro} is a delicate investigation of certain power sums. Our proof for $m < p-1$ is quite different, and has the flexibility of allowing for the robust version proved in the next section (which gives, for example, a classification of low-degree polynomials that take only 2 values on $99\%$ of $G$). 
 
 In this generality, the bound of $\frac{2m}{3}$ is tight.
 If $m$ is divisible by $3$, then the polynomial $f(X) = \left(X^{2m/3} + X^{m/3} - \frac{1}{2}\right)$ is such that $f(x)^2 = \frac{9}{4}$ for all $x \in G$ (since $x^{m/3}$ is a cube root of $1$).

 \item The proof of Lemma~\ref{lemgen} also applies as is to {\em rational powers} of $f(X)$, where we now talk about consecutive 0 coefficients in the power series. We only state it for characteristic $0$; it says that the power series expansion of $f(X)^{r/s}$, for $f(X)$ of degree $d$, does not have $d$ consecutive $0$ coefficients. Precisely, we have:
  
\begin{lemma}
\label{lemgen2}
Let $\F$ be a field of characteristic $0$. 
Let $t$ be a rational number.
Let $f(X) \in \F[X]$ be a polynomial of degree at most $d$.

Then any (formal) power series expansion of $f(X)^t$ in $\F[[X]]$ does not have
$d$ consecutive zero coefficients.
\end{lemma}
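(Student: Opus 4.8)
The plan is to rerun the argument of Lemma~\ref{lemgen}, the only change being that the ``tail'' factor plays the role of a power series rather than a polynomial; what carries the argument through is that the \emph{polynomial} part of the relevant identity still has bounded degree. As a preliminary normalization: since $f(X)^t$ is assumed to lie in $\F[[X]]$, writing $f(X)=X^m g(X)$ with $g(0)\neq 0$ forces $mt\in\mathbb{Z}_{\ge 0}$ and $f(X)^t=X^{mt}g(X)^t$, so a run of consecutive zero coefficients of $f(X)^t$ flanked by nonzero coefficients corresponds to such a run in $g(X)^t$; hence I may assume $f(0)\neq 0$, so that $F(X):=f(X)^t$ has $F(0)\neq 0$. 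Now suppose for contradiction that $F$ has $d$ consecutive zero coefficients (the case $d=0$ being vacuous); taking the run to be flanked by nonzero coefficients, there is an integer $\ell$ so that the truncation $B(X)$ of $F$ to degrees $<\ell$ satisfies $\deg B\le \ell-d-1$ while $[X^\ell]F\neq 0$. Thus $F=B+X^\ell A$ with $A\in\F[[X]]$ satisfying $A(0)\neq 0$, and $B\neq 0$ (indeed $B(0)=F(0)\neq 0$).

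Next I would extract the logarithmic-derivative identity. Writing $t=r/s$, the defining relation $F^s=f^r$ (negative powers interpreted via the unit $f\in\F[[X]]$) differentiates, and using $F^s=f^r$ once more to clear nonzero factors — legitimate since $\F[[X]]$ is a domain and $f,F\neq 0$ — to $s\,fF'=r\,f'F$, i.e.\ $fF'=tf'F$. Substituting $F=B+X^\ell A$ and isolating the part not divisible by $X^{\ell-1}$ gives $fB'-tf'B=X^{\ell-1}\cdot(\text{a power series})$. But $\deg(fB')$ and $\deg(f'B)$ are both at most $(\deg f)+(\deg B)-1\le d+(\ell-d-1)-1=\ell-2$, so the left side is a polynomial of degree $\le\ell-2$ that is divisible by $X^{\ell-1}$, hence identically $0$: $fB'=tf'B$.

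Finally, $fB'=tf'B$ combined with $fF'=tf'F$ yields $f(B'F-BF')=0$, so $B'F-BF'=0$, i.e.\ $(B/F)'=0$ in $\F[[X]]$ (recall $F$ is a unit there). In characteristic $0$ a power series with vanishing derivative is constant, so $B=\lambda F$ for some $\lambda\in\F$. Comparing coefficients of $X^\ell$ gives $0=[X^\ell]B=\lambda\,[X^\ell]F=\lambda A(0)$, whence $\lambda=0$ and $B=0$, contradicting $B\neq 0$. This contradiction establishes the lemma.

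The step I expect to need the most care is the bookkeeping in the second paragraph: pinning down which terms of $fF'-tf'F$ are divisible by $X^{\ell-1}$ and checking that the residual polynomial $fB'-tf'B$ genuinely has degree below $\ell-1$, together with being precise that ``$d$ consecutive zero coefficients'' must mean a run flanked by nonzero coefficients on both sides (so that $B\neq 0$ and $A(0)\neq 0$, which is exactly what makes the final coefficient comparison decisive; without this, $f(X)=X$, $t=1$ already gives a ``counterexample''). The use of characteristic $0$ is essential precisely at the last step — in characteristic $p$ one would only conclude $B/F\in\F[[X^p]]$, matching the degree hypothesis $d<p/t$ required in Lemma~\ref{lemgen} and Theorem~\ref{thm:1mod4}.
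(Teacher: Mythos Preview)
Your proof is correct and follows essentially the same approach as the paper, which simply remarks that the argument of Lemma~\ref{lemgen} applies unchanged to rational powers. Your extra care with the normalization $f(0)\neq 0$, the reading of ``$d$ consecutive zero coefficients'' as a run flanked by nonzero coefficients, and the explicit derivation of $fF'=tf'F$ from $F^s=f^r$ are all appropriate fleshings-out of details the paper leaves implicit; the concluding step $B=\lambda F$ is just the paper's $f^t=\lambda B$ read the other way.
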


This is stronger than the usual bound for this situation
(which shows up in polynomial factoring algorithms via the Hilbert irreducibility theorem~\cite{KaltofenHIT} and the Arora-Sudan low degree test~\cite{ASlowdegree}), 
which goes as follows:
Suppose $f(X)^{1/s} =  A(X) X^\ell + B(X)$, where $\deg(f) = d, \deg(B) = b$ and $A(X) \in \F[[X]]$ is nonzero, then $f(X) - B(X)^s$ is a nonzero polynomial of degree at least $\ell$, and so $\ell \leq \max(sb, d)$. Thus
if $b$ is large, this bound only guarantees that there is a nonzero coefficient 
$X^i$ for $i \in [b+1, sb]$  (instead of $[b+1, b+d]$ as guaranteed by Lemma \ref{lemgen2}).

\item Applying the same method, we can apply this method to the power series expansion of $e^{f(X)}$ too.

\begin{lemma}
\label{lemgen3}
Let $\F$ be a field of characteristic $0$. 
Let $f(X) \in \F[X]$ be a polynomial of degree at most $d$ with constant term $0$.

Then the (formal) power series expansion of $e^{f(X)}$ in $\F[[X]]$ does not have $d$ consecutive zero coefficients.
\end{lemma}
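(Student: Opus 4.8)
The plan is to mimic the proof of Lemma~\ref{lemgen} verbatim, using the fact that $e^{f(X)}$ satisfies a simple first-order ODE. Write $g(X) = e^{f(X)}$ as a formal power series (legitimate since $f$ has zero constant term, so $g$ is the usual exponential series composed with $f$), and suppose for contradiction that $g$ has $d$ consecutive zero coefficients, i.e. we can write
\begin{equation*}
g(X) = A(X) X^\ell + B(X),
\end{equation*}
with $\deg(B) \le b < \ell$, $A(X) \in \F[[X]]$ nonzero, and $b + d < \ell$. The defining property is that $g'(X) = f'(X) g(X)$, and $\deg(f') \le d - 1$. Substituting the decomposition of $g$ on both sides of this identity gives
\begin{equation*}
A'(X) X^\ell + \ell A(X) X^{\ell - 1} + B'(X) = f'(X) A(X) X^\ell + f'(X) B(X).
\end{equation*}
Every term except $B'(X)$ and $f'(X) B(X)$ is divisible by $X^{\ell - 1}$ (for the $f'(X) A(X) X^\ell$ term, $A(X)$ is a power series but the whole term is still divisible by $X^\ell$, hence by $X^{\ell-1}$), so reducing modulo $X^{\ell - 1}$ and using $\deg(f' B) \le d - 1 + b < \ell - 1$ yields $B'(X) = f'(X) B(X)$.

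Now the remaining step is to derive a contradiction from $B'(X) = f'(X) B(X)$ together with $A(X) \neq 0$. If $B(X) = 0$ then $g(X) = A(X)X^\ell$, but $g$ has nonzero constant term $e^0 = 1$ (or just: $g$ is a unit in $\F[[X]]$), so $B \neq 0$; also $g \neq 0$. Then $\frac{B'(X)}{B(X)} = f'(X)$, which forces $\bigl(\frac{B(X)}{g(X)}\bigr)' = \frac{B' g - B g'}{g^2} = \frac{B' g - B f' g}{g^2} = 0$ using $g' = f' g$. In characteristic $0$ a power series with zero derivative is constant, so $B(X) = \mu\, g(X)$ for a constant $\mu$; comparing with $g(X) = A(X) X^\ell + B(X)$ gives $(1 - \mu) g(X) = A(X) X^\ell$. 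Since $g$ is a unit, $1 - \mu = 0$ is impossible (it would force $A(X) X^\ell = 0$, i.e. $A = 0$), and $1 - \mu \neq 0$ forces $g(X)$ to be divisible by $X^\ell$, again contradicting that $g$ is a unit. Either way we contradict $A \neq 0$, so no such decomposition exists and $e^{f(X)}$ has no $d$ consecutive zero coefficients.

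I do not expect a serious obstacle here; the only points needing a little care are (i) justifying that formal differentiation and the identity $g' = f'g$ hold for the power series $e^{f(X)}$ (standard, since $f(0) = 0$ guarantees convergence in the $X$-adic topology and the chain rule holds formally), and (ii) being precise that ``$d$ consecutive zero coefficients starting at some position'' is exactly encoded by the decomposition $g = A X^\ell + B$ with $\deg B \le \ell - d - 1$ and $A \neq 0$ — then $b = \ell - d - 1$ gives $b + d = \ell - 1 < \ell$, matching the hypothesis used above. One could alternatively phrase the whole argument as a special/limiting case of Lemma~\ref{lemgen2} by writing $e^{f(X)} = \lim (1 + f(X)/n)^n$, but the direct ODE argument is cleaner and self-contained, so that is the route I would take.
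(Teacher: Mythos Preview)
Your proposal is correct and is exactly the approach the paper has in mind: the paper does not spell out a proof of Lemma~\ref{lemgen3} but simply says ``applying the same method'' as Lemma~\ref{lemgen}, and your argument is precisely that method specialized to the ODE $g' = f'g$ satisfied by $g = e^{f}$, leading to $B' = f'B$ and hence $B = \mu g$, contradicting $A \neq 0$. The only (harmless) edge case worth noting is $f = 0$, where $e^{f} = 1$ does have arbitrarily long runs of zero coefficients; your argument implicitly uses $f \neq 0$ when asserting $A \neq 0$, and the paper's statement should be read with this in mind (e.g.\ $\deg f = d$ exactly).
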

 
 \item The bounds of Lemma~\ref{lemgen}, Lemma~\ref{lemgen2} and Lemma~\ref{lemgen3} on the number of consecutive $0$ coefficients are tight, for example when $f(X)$ is of the form $\alpha X^d + \beta$.
 
 \item We can give another proof of Lemma~\ref{lemgen} (but not Lemma~\ref{lemgen2} or Lemma~\ref{lemgen3} as far as we know) using the Mason-Stothers {\em abc}-theorem for polynomials~\cite{abcthm1,abcthm2}. 
 
 Indeed, note that $f(X)^t$, $A(X) \cdot X^\ell$ and $B(X)$ all have degree at most $dt$. Furthermore, the radical of their product divides $f(X) \cdot A(X) \cdot X \cdot B(X)$, and thus has degree at most 
 $d + (dt-\ell) + 1 + b$. By the {\em abc}-theorem, we get that:
 $$ dt \leq (d + dt - \ell + 1 + b) - 1 = dt + d - \ell + b,$$
 and thus $d + b \geq \ell$. 
 
\end{enumerate}

\section{A robust version}

Let $p$ be a prime that is $1$ mod $4$. 
Let $S$ be the set of nonzero perfect squares in $\F_p$.

We say a polynomial $f(X) \in \F_p[X]$ {\em computes square roots with error $e$} if:
$$ \left|\{a \in S \mid f(a)^2 \neq a  \}\right| \leq e.$$

We show that any polynomial computing square roots even allowing $\Omega(p)$ error cannot have degree much smaller than $p/3$.

\noindent{\bf Theorem \ref{thm:1mod4robust}.\ }
{\em Let $p \equiv 1 \mod 4$. Suppose $f(X) \in \F_p[X]$ is a polynomial of degree $d$ that computes square roots with error $e$.

Then
$$ d \geq \begin{cases}
    \frac{p-1}{3}  - e &    e \leq \frac{p-7}{12}\\
    \frac{p-1}{2} - 3e - 1 &  e > \frac{p-7}{12}.
          \end{cases}
$$}
\begin{proof}
 We use the idea of the Berlekamp-Welch Reed-Solomon decoding algorithm~\cite{BerlekampWelch}.
 
 Let $U \subseteq S$ be the set of $a \in S$ where $f(a)^2 \neq a$.
 
 Let $E(X) \in \F_p[X]$ be the vanishing polynomial of $U$, given by:
 $$E(X) = \prod_{u \in U} (X - u).$$
 Note that $E$ is a nonzero polynomial of degree at most $e$.
 
 Then we have:
 $$ E(X)^2 \cdot f(X)^2  \equiv  E(X)^2 \cdot X \mod (X^{(p-1)/2} - 1).$$
 
 Let $A(X)$ be the polynomial of degree at most $2(e+d)-(p-1)/2$ such that:
 $$ E(X)^2 \cdot f(X)^2 - E(X)^2 \cdot X = A(X) \cdot (X^{(p-1)/2} - 1).$$ 
 
 Let $g(X) = E(X) \cdot f(X)$, and $B(X) = E(X)^2 \cdot X - A(X)$.
 
 Then
 $$g(X)^2 = A(X) \cdot X^{(p-1)/2}  +  B(X).$$
 
 We have:
 \begin{itemize}
  \item $\deg(g) \leq d+e$,
  \item $\deg(B) \leq \max(2e+1, 2(e+d) - (p-1)/2)$.
  \item $A(X) \neq 0$. Otherwise $E(X)^2 \cdot f(X)^2 = E(X)^2 \cdot X \Longrightarrow f(X)^2 = X$, which is impossible.
  \item $B(X) \neq 0$. Otherwise $E(X)^2 \cdot X = A(X)$, and so 
  $E(X)^2 f(X)^2 =  E(X)^2 X^{(p+1)/2}$, which implies that $f(X)^2 = X^{(p+1)/2}$. This is only possible if $p \equiv 3 \mod 4$.
 \end{itemize}
 Plugging this into Lemma~\ref{lemgen}, we get:
 \begin{itemize}
  \item  $$(d+e) + \left(2(d+e) - \frac{p-1}{2}\right) \geq \frac{p-1}{2},$$
  if $2d - (p-1)/2 \geq 1$,
  \item $$ (d+e) + (2e +1) \geq \frac{p-1}{2},$$
  if $2d - (p-1)/2 \leq 0$.
 \end{itemize}

 This tells us that either:
 $$ d \geq \frac{p-1}{3} - e,$$
 or:
 $$ d \geq \frac{p-1}{2} - 3e - 1,$$
 and thus:
 $$ d \geq \min\left( \frac{p-1}{3} - e, \frac{p-1}{2} - 3e-1 \right),$$
 which gives us the desired claim.
\end{proof}

Note that there is another simple lower bound (which applies for all $p$) of $d + \frac{e}{2} \geq \frac{p-1}{4}$ (the simple lower bound is better for $e > \frac{p-1}{10}$).
This is proved by considering the number of roots of the degree $2d$ polynomial $f(X)^2 - X$.

\section{Upper bound for special $p$}

In this section, we give an upper bound on the degree of polynomials computing square roots mod $p$, for infinitely many $p \equiv 1 \mod 4$. The upper bound is best when $p \equiv 5 \mod 8$, and we only present this case. The result and proof of this section is due to Agou, Deligl{\'e}se and Nicolas~\cite{ADN-sqrt}. It remains in this paper only for completeness.

\noindent{\bf Theorem \ref{thm:5mod8}.\ }
{\em  Let $p \equiv 5 \mod 8$. Then there is a polynomial that computes square roots in $\F_p$ with degree at most $\frac{3p+1}{8}$.}
\begin{proof}
Since $p \equiv 1 \mod 4$, we get that $-1$ is a perfect square mod $p$. Let $i \in \F_p$ be one of the square roots of $-1$.
Our main ingredient is the Tonelli-Shanks algorithm~\cite{ShanksTonelli1,ShanksTonelli2} computing square roots mod $p$. For $p = 4 \ell + 1$, the algorithm essentially gives a formula for the square root depending on two cases.
Specifically, let $u: S \to \F_p$ given by:
$$ u(a) = \begin{cases} 
            a^{(p + 3)/8} & a^{(p-1)/4} = 1, \\
            i \cdot a^{(p + 3)/8} & a^{(p-1)/4} = -1.
          \end{cases}$$
Then for all $a \in \F_p$, $u(a)$ is a square root of $a$.

This is already quite special; the set $S$ is partitioned into two equal sized parts $S_0$ and $S_1$, and on each $S_i$ we have a polynomial $f_i(X)$ computing the square root of degree about $\frac{1}{2} |S_i|$. (This is the lowest possible degree, since $f_i(X)^2 - X$ is a nonzero polynomial that vanishes on all of $S_i$.)

Usually if we have this kind of setup, even though the $f_i$ have unusually low degree, the unique polynomial $f$ (obtained from the Chinese remainder theorem) which restricts to $f_i$ on $S_i$ has no reason to have unusually low degree. 
But in this case it does!

Using the usual Chinese remainder formula, we consider the polynomial $f(X) \in \F_p[X]$ given by:
\begin{align*}
 f(X) &= \frac{1}{2} \left( X^{(p+3)/8} (X^{(p-1)/4} + 1) - i \cdot X^{(p+3)/8} (X^{(p-1)/4} - 1)  \right) \\
 &= \frac{1-i}{2}  X^{(3p+1)/8} + \frac{1+i}{2} X^{(p+3)/8}.
\end{align*}
By design, we have $f(a) = u(a)$ for all $a \in S$. 
Finally, notice that $\deg(f) \leq \frac{3p+1}{8}$. 

As a sanity check, we directly verify that $f(X)^2 \equiv X \mod (X^{(p-1)/2} -1)$.
Indeed,
\begin{align*}
f(X)^2 &=  \left(\frac{1-i}{2} \right)^2 X^{(3p+1)/4} + 2 \cdot \frac{(1-i)(1+i)}{4} X^{(4p+4)/8} + \left(\frac{1+i}{2} \right)^2 X^{(p+3)/4}\\
&= -\frac{i}{2} X^{(3p+1)/4} + X^{(p+1)/2} + \frac{i}{2} X^{(p+3)/4}\\
&=   \left(-\frac{i}{2} X^{(p+3)/4} +  X\right) \cdot (X^{(p-1)/2} - 1) + X,
\end{align*}
as desired.
\end{proof}

\section{Upper bounds for general $p$}

In this section, we give a slightly nontrivial upper bound on the degree of polynomials computing square roots for all $p$.
We will show that there is a polynomial with degree somewhat less than $\frac{p}{2}$ which computes square roots.

\begin{theorem}
 For all odd primes $p$, for $t = o\left(\frac{\sqrt{p}}{\log^2 p} \right)$, there is a polynomial of degree $\frac{p}{2} - t$ which computes square roots.
\end{theorem}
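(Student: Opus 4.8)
The plan is to show that the interpolating polynomial of a \emph{random} square-root function already has degree at most $\tfrac p2 - t$, by combining an exact formula for its coefficients with a character-sum estimate. Write $m = (p-1)/2$ and let $S \subseteq \F_p^*$ be the set of nonzero squares, so that $S$ is exactly the zero set of $X^m - 1$. Call $h : S \to \F_p$ a square-root function if $h(a)^2 = a$ for all $a \in S$; its unique interpolant $P_h$ of degree $< m$ is the lowest-degree polynomial computing it, so it suffices to produce one $h$ with $\deg P_h \le \tfrac p2 - t$. A standard Lagrange-interpolation computation, using $\prod_{b\in S}(X-b) = X^m-1$ and $a^m=1$ on $S$, gives the clean formula $[X^j]\,P_h = -2\sum_{a\in S} h(a)\,a^{-j}$; since $a^{-(m-k)} = a^k$ on $S$, the top $t$ coefficients $[X^{m-1}]P_h,\dots,[X^{m-t}]P_h$ all vanish if and only if $\sum_{a\in S} h(a)\,a^k = 0$ for $k=1,\dots,t$, and when they do, $\deg P_h \le m-1-t < \tfrac p2 - t$.

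Next I would fix one square root $\beta(a)$ of each $a \in S$ and let $T := \{\beta(a) : a \in S\}$, a transversal of $\F_p^*/\{\pm1\}$ of size $m$. Writing $a = b^2$ with $b = \beta(a)$, a square-root function $h$ is the same data as a sign vector $(\epsilon_b)_{b\in T} \in \{\pm1\}^T$ via $h(b^2) = \epsilon_b\, b$, and the conditions above become $\sum_{b\in T} \epsilon_b\, b^{2k+1} = 0$ in $\F_p$ for $k=1,\dots,t$. Now let the $\epsilon_b$ be independent uniform signs, let $e_p(z) = e^{2\pi i z/p}$ denote the standard additive character of $\F_p$, and use $\mathbf{1}[x=0] = \tfrac1p\sum_{y\in\F_p}e_p(xy)$ together with $\mathbf{E}[e_p(\epsilon_b c)] = \cos(2\pi c/p)$. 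Expanding each indicator and factoring over the independent signs, the probability that all $t$ equations hold equals
$$ \frac{1}{p^t}\sum_{\vec y \in \F_p^t}\ \prod_{b\in T}\cos\!\left(\frac{2\pi\,L_{\vec y}(b)}{p}\right), \qquad L_{\vec y}(b) := \sum_{k=1}^{t} y_k\, b^{2k+1}, $$
and the term $\vec y = 0$ contributes exactly $1$. It therefore suffices to show that the sum of the remaining terms has absolute value less than $1$.

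For $\vec y \neq 0$ we may write $L_{\vec y}(b) = b^3 R(b^2)$ with $R \in \F_p[z]$ nonzero of degree $<t$, so $L_{\vec y}$ is a nonzero \emph{odd} polynomial with $3 \le \deg L_{\vec y} \le 2t+1 < p$. The elementary inequality $|\cos\theta| \le \exp\!\left(\tfrac14(\cos 2\theta - 1)\right)$, applied termwise, gives $\prod_{b\in T}\left|\cos(2\pi L_{\vec y}(b)/p)\right| \le \exp\!\left(\tfrac14(\Sigma_{\vec y}-m)\right)$ with $\Sigma_{\vec y} := \sum_{b\in T}\cos(4\pi L_{\vec y}(b)/p) = \mathrm{Re}\sum_{b\in T} e_p(2L_{\vec y}(b))$. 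Since $L_{\vec y}$ is odd, the summands at $b$ and $-b$ are complex conjugates, so $\Sigma_{\vec y} = \tfrac12\!\left(\sum_{b\in\F_p}e_p(2L_{\vec y}(b)) - 1\right)$, and Weil's bound for the complete exponential sum of the polynomial $2L_{\vec y}$ (nonconstant, of degree $\le 2t+1 < p$, hence prime to $p$) gives $|\Sigma_{\vec y}| \le t\sqrt p + \tfrac12$. Summing over the fewer than $p^t$ nonzero $\vec y$, the total error is at most $p^t\exp\!\left(\tfrac14(t\sqrt p + \tfrac12 - m)\right)$, which is $<1$ as soon as $4t\ln p + t\sqrt p < \tfrac p2 - 1$; this holds comfortably when $t = o(\sqrt p/\log^2 p)$ (and indeed the same estimate permits $t$ up to roughly $\tfrac12\sqrt p$). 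Hence the probability is positive, a suitable $h$ exists, and the theorem follows.

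I expect the only genuinely delicate point to be the uniformity of Weil's bound over all $\vec y \neq 0$, i.e.\ the fact that $2L_{\vec y}$ is \emph{always} a nonconstant polynomial of degree less than $p$ (hence prime to $p$); here this is immediate from $3 \le \deg L_{\vec y} \le 2t+1 < p$. Everything else is bookkeeping: the Lagrange-coefficient formula, the additive-character expansion of the probability, and the $b\leftrightarrow -b$ symmetrization that turns the sum over the transversal $T$ into a full sum over $\F_p$. The same argument should carry over verbatim to $r$-th roots, with squares replaced by $r$-th powers, signs replaced by $r$-th roots of unity, and the $\pm$ symmetrization replaced by averaging over $\mu_r$.
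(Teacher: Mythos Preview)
Your proof is correct and follows the same architecture as the paper's: write the top coefficients of the interpolant as linear forms $\sum_b \epsilon_b\, b^{2k+1}$ in independent signs, expand the probability of simultaneous vanishing via additive characters, separate the $\vec y=0$ term, and control the $\vec y\neq 0$ terms with a Weil bound. The one genuine difference is in how the product $\prod_{b\in T}\cos\!\bigl(2\pi L_{\vec y}(b)/p\bigr)$ is bounded for $\vec y\neq 0$. The paper fixes the specific transversal $T=\{1,g,\dots,g^{m-1}\}$ and proves an equidistribution lemma for $P_y(g^j)$ over this multiplicative ``interval'' using \emph{mixed} additive--multiplicative character sums; the two $O(\log p)$ factors from the $L^1$ norms of interval indicators (one in the additive group, one in the multiplicative group) are exactly what force the hypothesis $t=o(\sqrt p/\log^2 p)$. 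You instead take an arbitrary transversal, apply the pointwise inequality $|\cos\theta|\le\exp\!\bigl(\tfrac14(\cos 2\theta-1)\bigr)$, and use the oddness of $L_{\vec y}$ to symmetrize the resulting sum over $T$ into a complete sum over $\F_p$; this needs only the \emph{additive} Weil bound and incurs no logarithmic losses. That route is both more elementary and quantitatively sharper --- as you observe, it already allows $t$ up to roughly $\tfrac12\sqrt p$. In short: same strategy, cleaner estimate.
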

\begin{proof}
Let $m = (p-1)/2$. Let $S \subseteq \F_p$ be the set of nonzero perfect squares,
and note that $|S| = m$.
For each $\alpha \in S$, let $\delta_\alpha(X) \in \F_p[X]$ be
the unique polynomial of degree $\leq (m-1)$ such that for all $\beta\in S$:
$$ \delta_\alpha(\beta) =  \begin{cases} 1  & \beta = \alpha, \\ 0 & \beta \neq \alpha. \end{cases}$$
Explicitly, we have:
$$\delta_\alpha(X) = \frac{1}{m} \left( \left(\frac{X}{\alpha}\right)^{m-1} + \left(\frac{X}{\alpha}\right)^{m-2} + \ldots + \frac{X}{\alpha} + 1 \right).$$

Then given a function $u: S \to \F_p$, 
the unique polynomial $f(X) \in \F_p[X]$ of degree at most $m-1$
such that $f(\alpha) = u(\alpha)$ for all $\alpha \in S$ is given by:
$$f(X) =  \sum_{\alpha \in S} u(\alpha) \delta_\alpha(X).$$

Our goal is to pick $u$ where each $u(\alpha)$ is one of the two square roots of $\alpha$ so that many of the leading coefficients of $f(X)$ equal $0$.

We now use the structure of $S$. Let $g$ be a generator of $\F_p^*$. 
Then $S = \{g^{2j} \mid  0 \leq j < (p-1)/2 \}$. Furthermore, 
for $\alpha = g^{2j} \in S$, one of the two square roots of $\alpha$
is $g^{j}$.

Thus, our problem can be reformulated as choosing a function
$v: S \to \{\pm 1 \}$ such that:
$$ f(X) = \sum_{j=0}^{(p-1)/2} v\left(g^{2j}\right)\cdot g^j \cdot \delta_{g^{2j}}(X)$$
has many leading coefficients equal to $0$.

Observe that the coefficient of $X^{m-i}$ in $f(X)$ equals:
$$ \frac{1}{m} \sum_{j=0}^{(p-1)/2} v(g^{2j}) g^{j} \left( \frac{1}{g^{2j}} \right)^{m-i} = 
 \frac{1}{m} \sum_{i=0}^{(p-1)/2} v(g^{2j}) g^{(2i+1)j}.$$

 Thus, to get a polynomial $f(X)$ of degree $< m-t$, we want to find a vector $v \in \{\pm 1\}^m$ that lies in the kernel of the Vandermonde-type matrix $M \in \F_p^{t \times m}$, where:
 $$ M_{i,j} = g^{(2i+1)j}.$$
 (The row index $i$ runs from $1$ to $t$, the column index $j$ runs from $0$ to $m-1$.)
 
 For later use, for a vector $y \in \F_p^t$, we define $P_y(Z) \in \F_p[Z]$ to be the polynomial:
 $$ P_y(Z) = \sum_{i = 1}^{t} y_i Z^{2i+1}.$$
 Thus for $j \in \{0, 1, \ldots, m-1 \}$, the $j$th entry of $M^Ty$ equals $P_y(g^{j})$.

 To show that there exists the desired $\pm 1$ vector, we count the number of such vectors using Fourier analysis. Let $\omega$ be a $p$th root of unity in $\mathbb C$. The number of such $\pm 1$ vectors equals:
 \begin{align*}
 N &= \sum_{v \in \{\pm 1\}^m } \one_{M v = 0}\\
 &= \sum_{v \in \{\pm 1\}^m }  \mathbb E_{y \in \F_p^t}  \left[ \omega^{\langle y, Mv \rangle} \right]  \\
 &= \mathbb E_y \left[ \sum_v  \omega^{\langle M^T y, v \rangle}   \right]\\
 &= \mathbb E_y \left[ \sum_v  \omega^{\sum_{j=0}^{m-1} (M^T y)_j \cdot  v_j}   \right]\\
 &= \mathbb E_y \left[ \sum_v   \prod_{j=0}^{m-1} \omega^{(M^T y)_j \cdot  v_j}   \right]\\
 &= \mathbb E_y \left[ \sum_v \prod_{j=0}^{m-1} \omega^{P_y(g^{j}) \cdot  v_j} \right].
 \end{align*}
 

For $y = 0$, the expression inside the expectation equals $2^m$.
We will show that for the remaining $p^t -1$ values of $y$,
the expression inside the expectation is very small. 

Fix any $y \neq 0$. 
The expression inside the expectation equals:
\begin{align}
 \label{weilval}
\sum_{v \in \{\pm 1\}^m} \prod_{j=0}^{m-1} \omega^{P_y(g^{j}) \cdot  v_j} = \prod_{j=1}^m \left(\omega^{P_y(g^{j})} + \omega^{-P_y(g^{j})} \right).
\end{align}

The next lemma (which uses the Weil bounds on mixed character sums) shows that for any nonzero $y$, the evaluations of the polynomial $P_y$ at $\{1, g, g^2, \ldots, g^{m-1}\}$ are well distributed in $\F_p$.
\begin{lemma}
\label{lem:equidist}
Let $0 \leq \alpha <  \beta \leq 1$.
 Let $y \in \F_p^t \setminus \{0\}$. Then:
 $$ \Pr_{j \in \{0,1, \ldots, m-1\}}\left[ P_y(g^j) \in [\alpha p, \beta p] \right] = (\beta - \alpha )  + O\left(\frac{t\log^2 p}{\sqrt{p}} \right).$$
\end{lemma}

Assuming the lemma, we get that for $t = o\left(\frac{\sqrt{p}}{\log^2 p}\right)$, the product in Equation \eqref{weilval} is at most $2^m \cdot \exp(-m)$.
Thus:
\begin{align*} N &\geq \frac{2^m}{p^t}  - \max_{y \neq 0} \left| \prod_{j=1}^m \left(\omega^{P_y(g^{j})} + \omega^{-P_y(g^{j})} \right) \right|\\
 &\geq \frac{2^m}{p^t}  - O(2^{m} \exp(-m))\\
 &\geq 2^m \left(\frac{1}{p^t} - \exp(-m)  \right)\\
 & > 0,
\end{align*}
where the penultimate inequality holds since
$$t = o\left( \frac{\sqrt{p}}{\log^2 p} \right) \ll \frac{p}{\log p} = \Theta\left( \frac{m}{\log p} \right).$$

This completes the proof.
\end{proof}

\subsection{Distribution of values of $P_y(g^j)$}

We now prove Lemma~\ref{lem:equidist}.

\begin{proof}
 Let $I$ be the interval $[\alpha p, \beta p]$. Let $J$ be the set
 $\{1, g, g^2, \ldots, g^{m-1}\}$. Then the probability in the statement of the lemma equals:
 \begin{align}
 \label{pval}
 \frac{1}{m} \sum_{ z \in \F_p}  {\one}_{I}(P_y(z)){\one}_J(z).
 \end{align}
 
 $I$ is an interval in the additive group of $\F_p$
 By standard results, if we expand $\one_I$ in its additive Fourier expansion:
 $$ \one_I = \sum_\psi \widehat{\one}_I(\psi) \psi$$
 (where the $\psi$ are the additive characters), then:
 \begin{align}
 \label{l1bound1}
 \sum_{\psi } |\widehat{\one}_I(\psi)| \leq O( \log p).
 \end{align}
 
 Similarly, $J$ is an interval in the multiplicative group of $\F_p$.
 If we expand $\one_J$ in its additive Fourier expansion:
 $$ \one_J = \sum_\psi \widetilde{\one}_J(\chi) \chi$$
 (where the $\chi$ are the multiplicative characters), then:
 \begin{align}
 \label{l1bound2}
 \sum_{\chi} |\widetilde{\one}_I(\chi)| \leq O( \log p).
 \end{align}
 
 Then the probability in Equation~\eqref{pval} equals:
 \begin{align*} &\frac{1}{m}\sum_{z} \left( \sum_{\psi} \widehat{\one}_I(\psi) \psi(P_y(z)) \right) \left( \sum_{\chi} \widetilde{\one}_J(\chi) \chi(z) \right)\\
  &= \frac{1}{m}\sum_{\psi, \chi} \widehat{\one}_I(\psi) \widetilde{\one}_J(\chi) \left(\sum_{z} \psi(P_y(z)) \chi(z) \right)\\
  &= \frac{1}{m} \left( \frac{|I|}{p}\cdot \frac{|J|}{p} \cdot p + O\left(\sum_{(\psi, \chi) \neq (1,1)} |\widehat{\one}_I(\psi)|\cdot |\widetilde{\one}_J(\chi)| \cdot \left|\sum_{z} \psi(P_y(z)) \chi(z) \right|\right) \right)\\
  &= (\beta-\alpha) + \frac{1}{m} \cdot  O\left(\sum_{(\psi, \chi) \neq (1,1)} |\widehat{\one}_I(\psi)| \cdot | \widetilde{\one}_J(\chi) | \cdot \left|\sum_{z} \psi(P_y(z)) \chi(z) \right|\right) \\
 \end{align*}
The Weil bound for mixed character sums (see~\cite{Schmidtbook}) shows that whenever
$(\psi, \chi)$ are not both trivial characters, the inner expression is bounded:
$$\left|\sum_{z} \psi(P_y(z)) \chi(z) \right| \leq O( t \sqrt{p}).$$
Combined with the bounds in Equations~\eqref{l1bound1}, \eqref{l1bound2}, we get the desired bound on the probability.
\end{proof}

\section{$t$th roots}

Now we discuss $t$th roots in place of square roots. We think of $t$ as a constant, and the prime $p$ growing. Let $p \equiv 1 \mod t$, so that the set of nonzero $t$th powers in $\F_p$ has size $\frac{p-1}{t}$.

Just like in the case $t = 2$, for special $p$ there is a simple formula for computing the $t$th root; when $p \equiv 1-t \mod t^2$, then $a^{\frac{p + t - 1}{t^2}}$ is a $t$th root of $a$ whenever $a$ is a perfect $t$th power in $\F_p$. Thus there is a polynomial of degree $\frac{1}{t^2}\cdot p + O(1)$ computing $t$th roots.
This matches the trivial lower bound of $\frac{p-1}{t^2}$ on the degree of polynomials computing $t$th root (proved by counting zeroes of the nonzero polynomial $f(X)^t - X$).

An immediate generalization of Theorem~\ref{thm:1mod4robust} shows that for all other $p$ (namely, $p \not\equiv 1 - t \mod t^2$, but we still preserve the condition that $p \equiv 1 \mod t$),
any polynomial of degree $d$ computing $t$th roots in $\F_p$ with error $e \leq \frac{t-1}{t^2(t+1)} \cdot (p-1) -1$ must satisfy
$$ d \geq \frac{2}{t (t+1) }\cdot (p-1) - e.$$
This is $\frac{2t}{t+1}$ times (which is about double for large $t$) the trivial lower bound, but quite far from the obvious upper bound (from Lagrange interpolation) of $\frac{1}{t}\cdot (p-1)$.

The best upper bound we know for $p$ not of the special form $p \equiv 1 - t \mod t^2$ is for $p$ such that $2p \equiv 2 - t \mod t^2$ (there are infinitely many such $p$ for any odd $t$), and in this case the polynomial $f(X) = X^{\frac{2p+t-2}{t^2} }$ computes $t$th roots. This is of the form
$\frac{2}{t^2} \cdot p + O(1)$, and thus somewhat close to our lower bound for large $t$.

Closing these gaps seems like a very basic and interesting open question.

\section*{Acknowledgements}

Both authors acknowledge support from IAS in 2018--19, where initial
discussions towards this paper took place. We thank N. Carella and Igor Shparlinski for valuable comments and pointers to the literature.

\bibliographystyle{alpha} 
\bibliography{bibfile}

\end{document}